\newcommand{\abs}[1]{\left|#1\right|}
\newcommand{\bdry}[1]{\partial #1}
\newcommand{\closure}[1]{\overline{#1}}
\newcommand{\dint}{\ds{\int}}
\newcommand{\ds}[1]{\displaystyle #1}
\newcommand{\eps}{\varepsilon}
\newcommand{\half}{\frac{1}{2}}
\newcommand{\norm}[2][]{\left\|#2\right\|_{#1}}
\renewcommand{\O}{\text{O}}
\renewcommand{\o}{\text{o}}
\newcommand{\PS}[1]{$(\text{PS})_{#1}$}
\newcommand{\R}{\mathbb R}
\newcommand{\seq}[1]{\left(#1\right)}
\newcommand{\set}[1]{\left\{#1\right\}}
\newcommand{\vol}[1]{\left|#1\right|}
\newenvironment{enumroman}{\begin{enumerate}

}{\end{enumerate}}
\newtheorem{lemma}{Lemma}[section]
\newtheorem{theorem}[lemma]{Theorem}
\numberwithin{equation}{section}
\title{\bf On a class of semipositone problems with singular Trudinger-Moser nonlinearities\thanks{{\em MSC2010:} Primary 35J20, Secondary 35B33, 35B09
\newline \indent\; {\em Key Words and Phrases:} semipositone problems, singular Trudinger-Moser nonlinearities, positive solutions}}
\author{\bf Shiqiu Fu and Kanishka Perera\\
Department of Mathematical Sciences\\
Florida Institute of Technology\\
Melbourne, FL 32901, USA\\
\em sfu2013@my.fit.edu \& kperera@fit.edu}
\date{}
\begin{document}

\maketitle

\begin{abstract}
We prove the existence of positive solutions for a class of semipositone problem with singular Trudinger-Moser nonlinearities. The proof is based on compactness and regularity arguments.
\end{abstract}

\section{Introduction}

Let $\Omega$ be a bounded domain in $\R^N,\, N \ge 2$ and let $f$ be a Carath\'{e}odory function on $\Omega \times [0,\infty)$. The semilinear elliptic boundary value problem
\[
\left\{\begin{aligned}
- \Delta u & = f(x,u) && \text{in } \Omega\\[5pt]
u & > 0 && \text{in } \Omega\\[5pt]
u & = 0 && \text{on } \bdry{\Omega}
\end{aligned}\right.
\]
is said to be of semipositone type if $f(\cdot,0) < 0$ on a set of positive measure. It is notoriously difficult to find positive solutions of this class of problems due to the fact that $u = 0$ is not a subsolution (see, e.g., Castro and Shivaji \cite{MR943804}, Ali et al.\! \cite{MR1116249}, Ambrosetti et al.\! \cite{MR1270096}, Chhetri et al.\! \cite{MR3406455}, Castro et al.\! \cite{MR3507282}, Costa et al.\! \cite{MR3626519}, and their references).

The purpose of the present paper is to study a class of semipositone problems with singular exponential nonlinearities in dimension $N = 2$. We consider the problem
\begin{equation} \label{1}
\left\{\begin{aligned}
- \Delta u & = \lambda u\, \frac{e^{\alpha u^2}}{|x|^\gamma} + \mu\, g(u) && \text{in } \Omega\\
u & > 0 && \text{in } \Omega\\[5pt]
u & = 0 && \text{on } \bdry{\Omega},
\end{aligned}\right.
\end{equation}
where $\Omega$ is a smooth bounded domain in $\R^2$ containing the origin, $\alpha > 0$, $0 \le \gamma < 2$, $\lambda, \mu > 0$ are parameters, and $g$ is a continuous function on $[0,\infty)$ satisfying
\begin{equation} \label{2}
\lim_{t \to \infty}\, \frac{g(t)}{e^{\beta t^2}} = 0 \quad \forall \beta > 0
\end{equation}
and
\begin{equation} \label{31}
\sup_{t \in [0,\infty)}\, \big(2G(t) - tg(t)\big) < \infty,
\end{equation}
where $G(t) = \int_0^t g(s)\, ds$. We make no assumptions about the sign of $g(0)$ and hence allow the semipositone case $g(0) < 0$. For example, the functions $g(t) = -1$, $g(t) = t^p - 1$, where $p \ge 1$, and $g(t) = e^t - 2$ all satisfy \eqref{2}, \eqref{31}, and $g(0) < 0$.

The motivation for problem \eqref{1} comes from the following singular Trudinger-Moser embedding of Adimurthi and Sandeep \cite{MR2329019}:
\[
\int_\Omega \frac{e^{\alpha u^2}}{|x|^\gamma}\, dx < \infty \quad \forall u \in H^1_0(\Omega)
\]
for all $\alpha > 0$ and $0 \le \gamma < 2$, and
\begin{equation} \label{32}
\sup_{\norm[H^1_0(\Omega)]{u} \le 1}\, \int_\Omega \frac{e^{\alpha u^2}}{|x|^\gamma}\, dx < \infty
\end{equation}
if and only if $\alpha/4 \pi + \gamma/2 \le 1$. Our problem is critical with respect to this embedding and hence the variational functional associated with this problem lacks compactness, which is an additional difficulty in finding solutions.

Let $\lambda_1(\gamma) > 0$ be the first eigenvalue of the singular eigenvalue problem
\[
\left\{\begin{aligned}
- \Delta u & = \lambda\, \frac{u}{|x|^\gamma} && \text{in } \Omega\\
u & = 0 && \text{on } \bdry{\Omega}
\end{aligned}\right.
\]
given by
\begin{equation} \label{3}
\lambda_1(\gamma) = \inf_{u \in H^1_0(\Omega) \setminus \set{0}}\, \frac{\dint_\Omega |\nabla u|^2\, dx}{\dint_\Omega \frac{u^2}{|x|^\gamma}\, dx}.
\end{equation}
We will show that problem \eqref{1} has a positive solution for all $0 < \lambda < \lambda_1(\gamma)$ and $\mu > 0$ sufficiently small. We have the following theorem.

\begin{theorem} \label{Theorem 1}
Assume that $\alpha > 0$ and $0 \le \gamma < 1$ satisfy
\[
\frac{\alpha}{4 \pi} + \frac{\gamma}{2} \le 1,
\]
$0 < \lambda < \lambda_1(\gamma)$, and $g$ satisfies \eqref{2} and \eqref{31}. Then there exists a $\mu^\ast > 0$ such that for all $0 < \mu < \mu^\ast$, problem \eqref{1} has a solution $u_\mu$.
\end{theorem}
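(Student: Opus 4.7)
The plan is to apply the mountain pass theorem to a suitably modified functional, and then obtain positivity of the resulting critical point by a continuity argument as $\mu \to 0^+$.

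\textbf{Variational setup.} To accommodate the semipositone case $g(0) < 0$, I extend $g$ to $\R$ by $\tilde g(t) := g(0)$ for $t < 0$, and replace $u$ by $u^+$ in the singular critical term; every nonnegative solution of the resulting modified equation
\[
-\Delta u = \lambda u^+\, \frac{e^{\alpha (u^+)^2}}{|x|^\gamma} + \mu\, \tilde g(u) \text{ in } \Omega, \qquad u = 0 \text{ on } \bdry{\Omega},
\]
is a solution of \eqref{1}. Its $C^1$ energy functional on $H^1_0(\Omega)$ (well defined by \eqref{32}) is
\[
I_\mu(u) = \frac{1}{2}\int_\Omega |\nabla u|^2\, dx - \frac{\lambda}{2\alpha} \int_\Omega \frac{e^{\alpha (u^+)^2}-1}{|x|^\gamma}\, dx - \mu \int_\Omega \tilde G(u)\, dx.
\]

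\textbf{Mountain pass geometry and minimax level.} Using $\lambda < \lambda_1(\gamma)$, the Taylor expansion of $e^{\alpha s^2}-1$, \eqref{3}, the subcritical growth \eqref{2}, and a linear bound on $\tilde G$ near $0$, I obtain $r, \rho > 0$ and $\mu^\ast > 0$ such that $I_\mu(u) \ge \rho$ on $\set{\norm[]{u} = r}$ for $0 < \mu < \mu^\ast$. For a nonnegative $u_0 \in H^1_0(\Omega)$ concentrated at the origin, the singular weight makes $I_\mu(Tu_0) \to -\infty$ as $T \to \infty$, so the mountain pass level $c_\mu \ge \rho > 0$ is well defined. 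Using a Moser-type test sequence adapted to the weight $|x|^{-\gamma}$ as in \cite{MR2329019}, one verifies the crucial bound $c_\mu < 2\pi(1-\gamma/2)/\alpha$, which is the Palais-Smale compactness threshold dictated by the sharp form of \eqref{32}.

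\textbf{Compactness and existence of a critical point.} Boundedness of PS sequences at level $c_\mu$ comes from \eqref{31}: the quantity $2\tilde G - \tilde g \cdot t$ is bounded above on $[0,\infty)$ and linear on $(-\infty,0)$, so $I_\mu(u_n) - \tfrac{1}{2}\langle I_\mu'(u_n), u_n\rangle$ controls, up to a Poincar\'e absorption, the nonnegative super-quadratic contribution of the exponential; standard singular Trudinger-Moser manipulations then yield $\norm[]{u_n} \le C$. The bound $c_\mu < 2\pi(1-\gamma/2)/\alpha$ combined with the concentration-compactness framework for singular Trudinger-Moser integrals forces the weak limit of $(u_n)$ to be strong, producing a critical point $u_\mu$ of $I_\mu$ at level $c_\mu$.

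\textbf{Positivity and main obstacle.} To deduce $u_\mu > 0$, and thus a genuine solution of \eqref{1}, I let $\mu \to 0^+$ along a subsequence. The uniform bounds give strong convergence $u_\mu \to u_0$ in $H^1_0(\Omega)$, and $I_0(u_0) = \lim c_\mu \ge \rho > 0$ shows $u_0 \not\equiv 0$. Testing $-\Delta u_0 = \lambda u_0^+ e^{\alpha (u_0^+)^2}/|x|^\gamma$ with $u_0^-$ forces $u_0 \ge 0$, and elliptic regularity, the strong maximum principle, and Hopf's lemma then give $u_0 > 0$ in $\Omega$ with $\partial u_0/\partial\nu < 0$ on $\bdry{\Omega}$. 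The same regularity upgrades $u_\mu \to u_0$ to convergence in $C^1(\closure{\Omega})$, whence $u_\mu > 0$ in $\Omega$ for all sufficiently small $\mu$, and the modification becomes irrelevant. The technical heart of the argument, and the main obstacle, is the sharp upper bound $c_\mu < 2\pi(1-\gamma/2)/\alpha$: constructing a mountain pass path whose maximum of $I_\mu$ stays below this critical threshold in the presence of both the singular weight and the semipositone perturbation, uniformly for small $\mu$, requires a carefully calibrated Moser sequence and is the source of the smallness condition on $\mu$.
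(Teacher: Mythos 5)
Your proposal is correct and follows essentially the same route as the paper: truncate/modify $g$ on the negative axis and replace $u$ by $u^+$ in the critical term, run the mountain pass theorem with a Moser-type sequence to keep the minimax level below the singular Trudinger--Moser compactness threshold $\tfrac{2\pi}{\alpha}(1-\tfrac{\gamma}{2})$ (up to an $\mathrm{O}(\mu)$ correction), and recover positivity by letting $\mu \to 0^+$, upgrading to $C^1(\closure{\Omega})$ convergence via $L^p$ elliptic regularity (this is where $\gamma < 1$ is needed), and invoking the strong maximum principle and Hopf lemma for the limit problem. The only cosmetic difference is your extension $\tilde g(t) = g(0)$ for $t<0$ versus the paper's truncation to $0$ below $t=-1$, which you correctly note requires absorbing a linear term rather than a constant.
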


We note that this result does not follow from standard arguments based on the maximum principle since $g(0)$ is not assumed to be nonnegative. Our proof is based on regularity arguments and will be given in Section \ref{Proof}, after establishing a suitable compactness property of an associated variational functional in the next section.

\section{A compactness result}

In this section we consider the modified problem
\begin{equation} \label{30}
\left\{\begin{aligned}
- \Delta u & = \lambda u^+\, \frac{e^{\alpha\, (u^+)^2}}{|x|^\gamma} + \mu\, \widetilde{g}(u) && \text{in } \Omega\\
u & = 0 && \text{on } \bdry{\Omega},
\end{aligned}\right.
\end{equation}
where $u^+(x) = \max \set{u(x),0}$ and
\[
\widetilde{g}(t) = \begin{cases}
0, & t \le -1\\[5pt]
(1 + t)\, g(0), & -1 < t < 0\\[5pt]
g(t), & t \ge 0.
\end{cases}
\]
Weak solutions of this problem coincide with critical points of the $C^1$-functional
\[
E_\mu(u) = \int_\Omega \left[\half\, |\nabla u|^2 - \frac{\lambda}{2 \alpha}\, \frac{e^{\alpha\, (u^+)^2} - 1}{|x|^\gamma} - \mu\, \widetilde{G}(u)\right] dx, \quad u \in H^1_0(\Omega),
\]
where $\widetilde{G}(t) = \int_0^t \widetilde{g}(s)\, ds$. The main result of this section is the following compactness result.

\begin{theorem} \label{Theorem 2}
Assume that $\alpha > 0$ and $0 \le \gamma < 2$ satisfy $\alpha/4 \pi + \gamma/2 \le 1$ and $g$ satisfies \eqref{2} and \eqref{31}. If $\mu_j > 0,\, \mu_j \to \mu \ge 0$, $\seq{u_j} \subset H^1_0(\Omega)$, and
\[
E_{\mu_j}(u_j) \to c, \qquad E_{\mu_j}'(u_j) \to 0
\]
for some $c \ne 0$ satisfying
\begin{equation} \label{4}
c < \frac{2 \pi}{\alpha} \left(1 - \frac{\gamma}{2}\right) - \frac{\mu \theta}{2} \vol{\Omega},
\end{equation}
where
\[
\theta = \sup_{t \in \R}\, \big(2 \widetilde{G}(t) - t \widetilde{g}(t)\big)
\]
and $\vol{\cdot}$ denotes the Lebesgue measure in $\R^2$, then a subsequence of $\seq{u_j}$ converges to a critical point of $E_\mu$ at the level $c$. In particular, $E_\mu$ satisfies the {\em \PS{c}} condition for all $c \ne 0$ satisfying \eqref{4}.
\end{theorem}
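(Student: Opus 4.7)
The plan is to follow the standard four-stage Palais-Smale analysis for critical Trudinger-Moser functionals (in the spirit of Adimurthi and de Figueiredo-Miyagaki-Ruf): (i) boundedness of $\seq{u_j}$ in $H^1_0(\Omega)$; (ii) identification of a weak subsequential limit $u$ as a critical point of $E_\mu$; and (iii) strong convergence, in which the level bound \eqref{4} rules out loss of compactness.

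For \emph{boundedness}, first test $E_{\mu_j}'(u_j) \to 0$ against $u_j^- = \max\{-u_j,0\}$: since $\widetilde g$ is bounded on $[-1,0]$ and vanishes on $(-\infty,-1]$, this immediately yields that $\norm{u_j^-}$ is bounded. For the positive part, one combines $E_{\mu_j}(u_j) = c + \o(1)$ with $\langle E_{\mu_j}'(u_j), u_j\rangle = \o(\norm{u_j})$, the elementary inequality $e^s - 1 \le s e^s$ for $s \ge 0$, and condition \eqref{31}; because \eqref{31} is only a weakened Ambrosetti-Rabinowitz-type hypothesis, the estimate may need to be closed by a contradiction argument with the normalized sequence $v_j = u_j/\norm{u_j}$, whose weak limit $v$ produces a contradiction either against the derivative identity (if $v \not\equiv 0$, because $\int_\Omega (u_j^+)^2 e^{\alpha (u_j^+)^2}/|x|^\gamma\, dx$ then grows exponentially in $\norm{u_j}^2$) or against the energy identity (if $v \equiv 0$, via the singular Trudinger-Moser inequality \eqref{32}). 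Once $\seq{u_j}$ is bounded, extract a subsequence with $u_j \rightharpoonup u$ in $H^1_0(\Omega)$, $u_j \to u$ a.e., and $u_j \to u$ in every $L^p(\Omega)$. The subexponential growth \eqref{2} combined with \eqref{32} yields $\widetilde g(u_j) \to \widetilde g(u)$ in $L^1(\Omega)$; for the exponential term $u_j^+ e^{\alpha (u_j^+)^2}/|x|^\gamma$, a.e.\ convergence together with local equi-integrability from \eqref{32} and Vitali's theorem gives convergence in $L^1_{\mathrm{loc}}(\Omega)$ to $u^+ e^{\alpha (u^+)^2}/|x|^\gamma$. Passing to the limit in $\langle E_{\mu_j}'(u_j), \varphi\rangle = \o(1)$ for $\varphi \in C_c^\infty(\Omega)$ then yields $E_\mu'(u) = 0$.

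The decisive step is \emph{strong convergence}. Setting $w_j = u_j - u$ and $L = \lim \norm{w_j}^2$ along a subsequence, a Brezis-Lieb-type decomposition together with $E_\mu'(u) = 0$ and the $L^1$-convergence of $\widetilde g(u_j)$ yields
\[
c = E_\mu(u) + \frac{L}{2}.
\]
Running the same manipulation as in the boundedness step at the critical point $u$ (using $\langle E_\mu'(u), u\rangle = 0$ and \eqref{31}) gives $E_\mu(u) \ge -\mu \theta \vol{\Omega}/2$. It remains to show that $L > 0$ forces $\alpha L \ge 4\pi(1 - \gamma/2)$: if instead $\alpha L < 4\pi(1 - \gamma/2)$, then $\alpha \norm{w_j}^2 < 4\pi(1 - \gamma/2) - \eps$ for large $j$, so \eqref{32} applied to $w_j/\norm{w_j}$ bounds $e^{\alpha w_j^2}/|x|^\gamma$ in some $L^{1 + \delta}(\Omega)$, and Vitali together with $w_j \to 0$ a.e.\ then forces the exponential contribution in $\langle E_{\mu_j}'(u_j) - E_\mu'(u), w_j\rangle$ to be $\o(1)$, hence $\norm{w_j} \to 0$. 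Plugging $\alpha L \ge 4\pi(1 - \gamma/2)$ into the displayed identity together with the lower bound on $E_\mu(u)$ then contradicts \eqref{4}, so $L = 0$ and $u_j \to u$ strongly; the condition $c \ne 0$ is used to exclude a degenerate scenario. The main obstacle is precisely this concentration estimate: the singular weight $|x|^{-\gamma}$ allows concentration at the origin as well as at generic interior points, so the Brezis-Lieb splitting of the nonlinear energy must be executed with sufficient care that the singular Trudinger-Moser constant $4\pi(1 - \gamma/2)$ is saturated exactly, which demands careful truncations and a sharp application of \eqref{32} just below the critical exponent.
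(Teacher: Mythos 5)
Your three-stage architecture matches the paper's, but the decisive concentration-exclusion step takes a genuinely different route. The paper normalizes, sets $\widetilde u_j = u_j/\norm{u_j}$, and invokes the Lions-type refinement of the singular Trudinger--Moser inequality (Adimurthi--Sandeep, Lemma \ref{Lemma 5} here), which gives uniform integrability of $e^{\beta \widetilde u_j^2}/|x|^\gamma$ for all $\beta < 4\pi(1-\gamma/2)/(1-\norm{\widetilde u}^2)$ provided the weak limit is \emph{nonzero}; the level bound \eqref{4} together with the lower bound $E_\mu(u) \ge -\mu\theta\vol{\Omega}/2$ at the critical point then places the sequence strictly below that threshold. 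You instead split $u_j = u + w_j$ and apply the plain inequality \eqref{32} to $w_j/\norm{w_j}$. Both yield the same threshold $\alpha\lim\norm{w_j}^2 < 4\pi(1-\gamma/2)$: your route pays with the Young-inequality loss in $u_j^2 \le (1+\eps)\,w_j^2 + C_\eps u^2$ and the extra H\"older exponents needed to keep the singular weight integrable, while the paper pays with a separate preliminary argument that $u \ne 0$ (this is precisely where $c \ne 0$ enters), since Lemma \ref{Lemma 5} says nothing when $u = 0$; your version absorbs the $u = 0$ case into the same computation, which is a legitimate simplification.

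Two gaps need attention. First, the equi-integrability of $u_j^+\, e^{\alpha (u_j^+)^2}/|x|^\gamma$ --- which you need both to get $E_\mu'(u) = 0$ and to justify the identity $c = E_\mu(u) + L/2$ (the exponential integral must converge with \emph{no} defect, which is more than a Brezis--Lieb splitting provides) --- does not follow from \eqref{32} as you claim: that inequality controls $e^{\beta u_j^2}/|x|^\gamma$ uniformly only when $\beta \sup_j \norm{u_j}^2 \le 4\pi(1-\gamma/2)$, and nothing prevents $\alpha \norm{u_j}^2$ from exceeding this value before compactness is established. The correct ingredient is the tail bound $\sup_j \int_\Omega (u_j^+)^2\, e^{\alpha (u_j^+)^2} |x|^{-\gamma}\, dx < \infty$, read off from $E_{\mu_j}'(u_j)\, u_j = \o(\norm{u_j})$ once boundedness is known, which makes the integral over $\set{u_j^+ \ge M}$ of order $1/M$ uniformly in $j$ (the paper's \eqref{9} and Lemma \ref{Lemma 4}). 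Second, your boundedness step is not closed as written: in the combination $a E_{\mu_j}(u_j) - E_{\mu_j}'(u_j)\, u_j$ one needs $a > 2$ to retain $\norm{u_j}^2$ with a positive coefficient, and then $e^s - 1 \le s e^s$ leaves the exponential contribution with a negative, unbounded coefficient. What closes the estimate (the paper takes $a = 4$) is the stronger pointwise fact that $\bigl[(u^+)^2 - 2/\alpha\bigr] e^{\alpha (u^+)^2} + 2/\alpha$ is bounded below, the quadratic prefactor eventually dominating the constant $2/\alpha$; with that, plus \eqref{2} to absorb the $g$-terms into the dominant exponential integral, no contradiction argument with a normalized sequence is needed.
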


First we prove the following lemma.

\begin{lemma} \label{Lemma 4}
If $\seq{u_j}$ is a sequence in $H^1_0(\Omega)$ converging a.e.\! to $u \in H^1_0(\Omega)$ and
\begin{equation} \label{5}
\sup_j \int_\Omega (u_j^+)^2\, \frac{e^{\alpha\, (u_j^+)^2}}{|x|^\gamma}\, dx < \infty,
\end{equation}
then
\[
\int_\Omega \frac{e^{\alpha\, (u_j^+)^2}}{|x|^\gamma}\, dx \to \int_\Omega \frac{e^{\alpha\, (u^+)^2}}{|x|^\gamma}\, dx.
\]
\end{lemma}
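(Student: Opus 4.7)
The plan is to establish $L^1$-convergence of the integrands $e^{\alpha (u_j^+)^2}/|x|^\gamma$ via Vitali's convergence theorem. Since $u_j \to u$ a.e.\ and the exponential is continuous, we have pointwise a.e.\ convergence of the integrands. Given that $|\Omega|$ is finite, Vitali's theorem reduces the problem to verifying uniform integrability of the sequence $\set{e^{\alpha (u_j^+)^2}/|x|^\gamma}_j$.

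To prove uniform integrability, I would use a level-set splitting. Fix $M > 0$ and a measurable $E \subset \Omega$, and decompose
\[
\int_E \frac{e^{\alpha (u_j^+)^2}}{|x|^\gamma}\, dx = \int_{E \cap \set{(u_j^+)^2 \ge M}} \frac{e^{\alpha (u_j^+)^2}}{|x|^\gamma}\, dx + \int_{E \cap \set{(u_j^+)^2 < M}} \frac{e^{\alpha (u_j^+)^2}}{|x|^\gamma}\, dx.
\]
On the first piece, the bound $1 \le (u_j^+)^2/M$ combined with hypothesis \eqref{5} gives
\[
\int_{E \cap \set{(u_j^+)^2 \ge M}} \frac{e^{\alpha (u_j^+)^2}}{|x|^\gamma}\, dx \le \frac{1}{M} \int_\Omega (u_j^+)^2\, \frac{e^{\alpha (u_j^+)^2}}{|x|^\gamma}\, dx \le \frac{C}{M},
\]
which is small uniformly in $j$ once $M$ is large. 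On the second piece, the integrand is dominated by $e^{\alpha M}/|x|^\gamma$, and since $\gamma < 2$ we have $|x|^{-\gamma} \in L^1(\Omega)$, so the absolute continuity of this fixed integral yields $\delta > 0$ such that $|E| < \delta$ forces the second piece to be small uniformly in $j$. This establishes uniform integrability.

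With uniform integrability in hand, Vitali's convergence theorem delivers
\[
\int_\Omega \frac{e^{\alpha (u_j^+)^2}}{|x|^\gamma}\, dx \to \int_\Omega \frac{e^{\alpha (u^+)^2}}{|x|^\gamma}\, dx,
\]
after first noting via Fatou's lemma that the limit is finite. The main obstacle, and the place where the precise hypotheses do the work, is the uniform integrability step: the $L^1$ bound on $(u_j^+)^2\, e^{\alpha (u_j^+)^2}/|x|^\gamma$ in \eqref{5} is exactly what controls the tail where $u_j^+$ is large, while $\gamma < 2$ is exactly what is needed to control the singular weight on the bulk region. Everything else is a routine application of a standard convergence theorem.
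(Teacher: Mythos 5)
Your proof is correct and hinges on the same key estimate as the paper's: splitting at the level set where $u_j^+$ is large and using \eqref{5} as a Chebyshev-type bound to control that piece uniformly in $j$, with the integrability of $|x|^{-\gamma}$ (from $\gamma < 2$) handling the complementary piece. The paper phrases the conclusion as a direct double limit (first $j \to \infty$, then $M \to \infty$) rather than packaging the estimate as uniform integrability plus Vitali's theorem, but that is only a difference of presentation.
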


\begin{proof}
For $M > 0$, write
\[
\int_\Omega \frac{e^{\alpha\, (u_j^+)^2}}{|x|^\gamma}\, dx = \int_{\{u_j^+ < M\}} \frac{e^{\alpha\, (u_j^+)^2}}{|x|^\gamma}\, dx + \int_{\{u_j^+ \ge M\}} \frac{e^{\alpha\, (u_j^+)^2}}{|x|^\gamma}\, dx.
\]
By \eqref{5},
\[
\int_{\{u_j^+ \ge M\}} \frac{e^{\alpha\, (u_j^+)^2}}{|x|^\gamma}\, dx \le \frac{1}{M^2} \int_\Omega (u_j^+)^2\, \frac{e^{\alpha\, (u_j^+)^2}}{|x|^\gamma}\, dx = \O\! \left(\frac{1}{M^2}\right) \text{ as } M \to \infty.
\]
Hence
\[
\int_\Omega \frac{e^{\alpha\, (u_j^+)^2}}{|x|^\gamma}\, dx = \int_{\{u_j^+ < M\}} \frac{e^{\alpha\, (u_j^+)^2}}{|x|^\gamma}\, dx + \O\! \left(\frac{1}{M^2}\right),
\]
and the conclusion follows by first letting $j \to \infty$ and then letting $M \to \infty$.
\end{proof}

We will also need the following result from Adimurthi and Sandeep \cite[Theorem 2.3]{MR2329019}.

\begin{lemma} \label{Lemma 5}
Let $0 \le \gamma < 2$. If $\seq{u_j}$ is a sequence in $H^1_0(\Omega)$ with $\norm{u_j} = 1$ for all $j$ and converging weakly to a nonzero function $u$, then
\[
\sup_j \int_\Omega \frac{e^{\beta u_j^2}}{|x|^\gamma}\, dx < \infty
\]
for all $\beta < 4 \pi (1 - \gamma/2)/(1 - \norm{u}^2)$.
\end{lemma}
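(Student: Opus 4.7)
\textit{Proof plan.} The plan is to decompose $u_j = u + v_j$, where $v_j \rightharpoonup 0$ in $H^1_0(\Omega)$. Expanding $\norm{u_j}^2 = \norm{u}^2 + 2 \langle u, v_j \rangle + \norm{v_j}^2$ and using $\langle u, v_j \rangle \to 0$ yields $\norm{v_j}^2 \to 1 - \norm{u}^2 =: \sigma^2$, with $\sigma < 1$ since $u \ne 0$. The strategy is to transfer the exponential estimate from $u_j$ to $v_j$, exploiting the fact that $\norm{v_j}^2 \to \sigma^2 < 1$ to create room for a rate $\beta$ that exceeds the critical exponent $4 \pi (1 - \gamma/2)$.

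For each $\epsilon > 0$, Young's inequality gives $u_j^2 \le (1 + \epsilon)\, v_j^2 + (1 + 1/\epsilon)\, u^2$. Splitting the singular weight as $|x|^{-\gamma} = |x|^{-\gamma/p}\, |x|^{-\gamma/p'}$ for conjugate exponents $p, p' > 1$ and applying H\"older's inequality, I would obtain
\[
\int_\Omega \frac{e^{\beta u_j^2}}{|x|^\gamma}\, dx \le \left(\int_\Omega \frac{e^{\beta p (1 + \epsilon)\, v_j^2}}{|x|^\gamma}\, dx\right)^{\!1/p} \left(\int_\Omega \frac{e^{\beta p' (1 + 1/\epsilon)\, u^2}}{|x|^\gamma}\, dx\right)^{\!1/p'}.
\]
The second factor is a finite constant for any fixed $p, \epsilon$, because $u \in H^1_0(\Omega)$ is independent of $j$ and $\int_\Omega e^{\alpha u^2}/|x|^\gamma\, dx < \infty$ for every $\alpha > 0$ by the singular Trudinger--Moser embedding of Adimurthi and Sandeep recalled in the introduction.

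For the first factor, the strict hypothesis $\beta \sigma^2 < 4 \pi (1 - \gamma/2)$ permits the choice of $p > 1$ and $\epsilon > 0$ small enough that $\beta p (1 + \epsilon)\, \sigma^2 < 4 \pi (1 - \gamma/2)$ still holds. For $j$ large, this gives $\beta p (1 + \epsilon) \norm{v_j}^2 \le 4 \pi (1 - \gamma/2)$, so setting $w_j = v_j / \norm{v_j}$ (a unit vector in $H^1_0(\Omega)$) one has $\beta p (1 + \epsilon)\, v_j^2 \le 4 \pi (1 - \gamma/2)\, w_j^2$ pointwise, and the sharp embedding \eqref{32} yields a $j$-uniform bound on the first factor. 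The main obstacle is the tight bookkeeping of constants: the losses $(1 + \epsilon)$ from Young and $p > 1$ from H\"older must both be absorbed into the slack $4 \pi (1 - \gamma/2) - \beta\,(1 - \norm{u}^2) > 0$, which is precisely what the strict upper bound on $\beta$ affords.
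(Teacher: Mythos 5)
The paper does not actually prove this lemma: it is quoted directly from Adimurthi and Sandeep \cite[Theorem 2.3]{MR2329019}, so there is no in-paper argument to compare against line by line. Your self-contained proof is correct, and it is the standard Lions-type improvement of the Trudinger--Moser inequality adapted to the singular weight (very likely the same mechanism as in the cited reference). All the pieces fit: $\norm{v_j}^2 \to 1 - \norm{u}^2$ follows from weak convergence; the Young estimate $u_j^2 \le (1+\epsilon)v_j^2 + (1+1/\epsilon)u^2$ and the H\"older split of $|x|^{-\gamma}$ into $|x|^{-\gamma/p}\,|x|^{-\gamma/p'}$ are exactly right; the factor involving $u$ is finite for every exponent by the non-uniform part of the Adimurthi--Sandeep embedding; and after normalizing $w_j = v_j/\norm{v_j}$ the exponent you land on, $4\pi(1-\gamma/2)$, is precisely the borderline value for which the uniform bound \eqref{32} still holds (since $\alpha/4\pi + \gamma/2 = 1$ there), which is what makes the strict inequality on $\beta$ sufficient. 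Two minor points to tidy up, neither affecting the substance: $(i)$ your uniform bound only covers $j \ge j_0$; for the finitely many remaining indices each integral is individually finite by the pointwise embedding $\int_\Omega e^{\beta u_j^2}/|x|^\gamma\, dx < \infty$, so the supremum over all $j$ is still finite --- this should be said explicitly; $(ii)$ if $u_j = u$ for some $j$ then $w_j$ is undefined, but the corresponding integral is again trivially finite, so such indices can be discarded at the outset.
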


We are now ready to prove Theorem \ref{Theorem 2}.

\begin{proof}[Proof of Theorem \ref{Theorem 2}]
We have
\begin{equation} \label{6}
E_{\mu_j}(u_j) = \half \norm{u_j}^2 - \frac{\lambda}{2 \alpha} \int_\Omega \frac{e^{\alpha\, (u_j^+)^2} - 1}{|x|^\gamma}\, dx - \mu_j \int_\Omega \widetilde{G}(u_j)\, dx = c + \o(1)
\end{equation}
and
\begin{equation} \label{7}
E_{\mu_j}'(u_j)\, u_j = \norm{u_j}^2 - \lambda \int_\Omega (u_j^+)^2\, \frac{e^{\alpha\, (u_j^+)^2}}{|x|^\gamma}\, dx - \mu_j \int_\Omega u_j\, \widetilde{g}(u_j)\, dx = \o(\norm{u_j}).
\end{equation}
Multiplying \eqref{6} by $4$ and subtracting \eqref{7} gives
\begin{multline*}
\norm{u_j}^2 + \lambda \int_\Omega \left(\left[(u_j^+)^2 - \frac{2}{\alpha}\right] e^{\alpha\, (u_j^+)^2} + \frac{2}{\alpha}\right) \frac{dx}{|x|^\gamma} + \mu_j \int_\Omega \big(u_j\, \widetilde{g}(u_j) - 4 \widetilde{G}(u_j)\big)\, dx\\[5pt]
= 4c + \o(\norm{u_j} + 1),
\end{multline*}
and this together with \eqref{2} implies that $\seq{u_j}$ is bounded in $H^1_0(\Omega)$. Hence a renamed subsequence converges to some $u$ weakly in $H^1_0(\Omega)$, strongly in $L^p(\Omega)$ for all $p \in [1,\infty)$, and a.e.\! in $\Omega$. Moreover,
\[
\sup_j \int_\Omega e^{\beta u_j^2}\, dx < \infty
\]
for all $\beta \le 4 \pi/(\sup_j \norm{u_j})$ by \eqref{32}, and hence $\int_\Omega u_j\, \widetilde{g}(u_j)\, dx$ is bounded by \eqref{2}. Then
\begin{equation} \label{9}
\sup_j \int_\Omega (u_j^+)^2\, \frac{e^{\alpha\, (u_j^+)^2}}{|x|^\gamma}\, dx < \infty
\end{equation}
by \eqref{7}, and hence
\begin{equation} \label{10}
\int_\Omega \frac{e^{\alpha\, (u_j^+)^2}}{|x|^\gamma}\, dx \to \int_\Omega \frac{e^{\alpha\, (u^+)^2}}{|x|^\gamma}\, dx
\end{equation}
by Lemma \ref{Lemma 4}. Denoting by $C$ a generic positive constant,
\[
|u_j\, \widetilde{g}(u_j)| \le |u_j|\, \big(e^{\alpha\, (u_j^+)^2/2} + C\big) \le \frac{e^{\alpha\, (u_j^+)^2}}{|x|^\gamma} + C \left(u_j^2 + 1\right)
\]
by \eqref{2}, so it follows from \eqref{10} and the dominated convergence theorem that
\begin{equation} \label{11}
\int_\Omega u_j\, \widetilde{g}(u_j)\, dx \to \int_\Omega u\, \widetilde{g}(u)\, dx.
\end{equation}
Similarly,
\begin{equation} \label{33}
\int_\Omega \widetilde{G}(u_j)\, dx \to \int_\Omega \widetilde{G}(u)\, dx.
\end{equation}

We claim that the weak limit $u$ is nonzero. Suppose $u = 0$. Then
\begin{equation} \label{12}
\int_\Omega \frac{e^{\alpha\, (u_j^+)^2}}{|x|^\gamma}\, dx \to \int_\Omega \frac{dx}{|x|^\gamma}, \qquad \int_\Omega u_j\, \widetilde{g}(u_j)\, dx \to 0, \qquad \int_\Omega \widetilde{G}(u_j)\, dx \to 0
\end{equation}
by \eqref{10}--\eqref{33}. So \eqref{6} implies that $c \ge 0$ and
\begin{equation} \label{34}
\norm{u_j} \to (2c)^{1/2}.
\end{equation}
Noting that $c < 2 \pi\, (1 - \gamma/2)/\alpha$ by \eqref{4}, let $2c < \nu < 4 \pi\, (1 - \gamma/2)/\alpha$. Then \eqref{34} implies that $\norm{u_j} \le \nu^{1/2}$ for all $j \ge j_0$ for some $j_0$. Let $q = 4 \pi\, (1 - \gamma/2)/\alpha \nu > 1$ and let $1/(1 - 1/q) < r < 2/\gamma\, (1 - 1/q)$. By the H\"{o}lder inequality,
\[
\int_\Omega (u_j^+)^2\, \frac{e^{\alpha\, (u_j^+)^2}}{|x|^\gamma}\, dx \le \left(\int_\Omega |u_j|^{2p}\, dx\right)^{1/p} \left(\int_\Omega \frac{e^{q \alpha u_j^2}}{|x|^\gamma}\, dx\right)^{1/q} \left(\int_\Omega \frac{dx}{|x|^{\gamma r\, (1-1/q)}}\right)^{1/r},
\]
where $1/p + 1/q + 1/r = 1$. The first integral on the right-hand side converges to zero since $u = 0$, the second integral is bounded for $j \ge j_0$ by \eqref{32} since $q \alpha u_j^2 = 4 \pi\, (1 - \gamma/2)\, \widetilde{u}_j^2$, where $\widetilde{u}_j = u_j/\nu^{1/2}$ satisfies $\norm{\widetilde{u}_j} \le 1$, and the last integral is finite since $\gamma r\, (1 - 1/q) < 2$, so
\[
\int_\Omega (u_j^+)^2\, \frac{e^{\alpha\, (u_j^+)^2}}{|x|^\gamma}\, dx \to 0.
\]
Then $u_j \to 0$ by \eqref{7} and \eqref{12}, and hence $c = 0$ by \eqref{34}, a contradiction. So $u$ is nonzero.

Since $E_{\mu_j}'(u_j) \to 0$,
\begin{equation} \label{35}
\int_\Omega \nabla u_j \cdot \nabla v\, dx - \lambda \int_\Omega u_j^+\, \frac{e^{\alpha\, (u_j^+)^2}}{|x|^\gamma}\, v\, dx - \mu_j \int_\Omega \widetilde{g}(u_j)\, v\, dx \to 0
\end{equation}
for all $v \in H^1_0(\Omega)$. For $v \in C^\infty_0(\Omega)$, an argument similar to that in the proof of Lemma \ref{Lemma 4} using the estimate
\[
\abs{\int_{\{u_j^+ \ge M\}} u_j^+\, \frac{e^{\alpha\, (u_j^+)^2}}{|x|^\gamma}\, v\, dx} \le \frac{\sup |v|}{M} \int_\Omega (u_j^+)^2\, \frac{e^{\alpha\, (u_j^+)^2}}{|x|^\gamma}\, dx
\]
and \eqref{9} shows that $\dint_\Omega u_j^+\, \frac{e^{\alpha\, (u_j^+)^2}}{|x|^\gamma}\, v\, dx \to \dint_\Omega u^+\, \frac{e^{\alpha\, (u^+)^2}}{|x|^\gamma}\, v\, dx$. Moreover, denoting by $C$ a generic positive constant,
\[
|\widetilde{g}(u_j)\, v| \le \sup |v|\, \big(e^{\alpha\, (u_j^+)^2} + C\big) \le C\, \sup |v| \left(\frac{e^{\alpha\, (u_j^+)^2}}{|x|^\gamma} + 1\right)
\]
by \eqref{2}, so it follows from \eqref{10} and the dominated convergence theorem that
\[
\int_\Omega \widetilde{g}(u_j)\, v\, dx \to \int_\Omega \widetilde{g}(u)\, v\, dx.
\]
So it follows from \eqref{35} that
\[
\int_\Omega \nabla u \cdot \nabla v\, dx = \lambda \int_\Omega u^+\, \frac{e^{\alpha\, (u^+)^2}}{|x|^\gamma}\, v\, dx + \mu \int_\Omega \widetilde{g}(u)\, v\, dx.
\]
Then this holds for all $v \in H^1_0(\Omega)$ by density, and taking $v = u$ gives
\begin{equation} \label{13}
\norm{u}^2 = \lambda \int_\Omega (u^+)^2\, \frac{e^{\alpha\, (u^+)^2}}{|x|^\gamma}\, dx + \mu \int_\Omega u\, \widetilde{g}(u)\, dx.
\end{equation}

Next we claim that
\begin{equation} \label{14}
\int_\Omega (u_j^+)^2\, \frac{e^{\alpha\, (u_j^+)^2}}{|x|^\gamma}\, dx \to \int_\Omega (u^+)^2\, \frac{e^{\alpha\, (u^+)^2}}{|x|^\gamma}\, dx.
\end{equation}
We have
\begin{equation} \label{15}
(u_j^+)^2\, \frac{e^{\alpha\, (u_j^+)^2}}{|x|^\gamma} \le u_j^2\, \frac{e^{\alpha u_j^2}}{|x|^\gamma} = u_j^2\, \frac{ e^{\alpha\, \norm{u_j}^2\, \widetilde{u}_j^2}}{|x|^\gamma},
\end{equation}
where $\widetilde{u}_j = u_j/\norm{u_j}$. Setting
\[
\kappa = \frac{\lambda}{2 \alpha} \int_\Omega \frac{e^{\alpha\, (u^+)^2} - 1}{|x|^\gamma}\, dx + \mu \int_\Omega \widetilde{G}(u)\, dx,
\]
we have
\[
\norm{u_j}^2 \to 2\, (c + \kappa)
\]
by \eqref{6}, \eqref{10}, and \eqref{33}, so $\widetilde{u}_j$ converges weakly and a.e.\! to $\widetilde{u} = u/[2\, (c + \kappa)]^{1/2}$. Then
\begin{equation} \label{16}
\norm{u_j}^2 \left(1 - \norm{\widetilde{u}}^2\right) \to 2\, (c + \kappa) - \norm{u}^2.
\end{equation}
Since $te^t \ge e^t - 1$ for all $t \ge 0$,
\[
\int_\Omega (u^+)^2\, \frac{e^{\alpha\, (u^+)^2}}{|x|^\gamma}\, dx \ge \frac{1}{\alpha} \int_\Omega \frac{e^{\alpha\, (u^+)^2} - 1}{|x|^\gamma}\, dx,
\]
and
\[
\int_\Omega u\, \widetilde{g}(u)\, dx \ge 2 \int_\Omega \widetilde{G}(u)\, dx - \theta \vol{\Omega}
\]
since $\theta \ge 2 \widetilde{G}(t) - t \widetilde{g}(t)$ for all $t \in \R$, so it follows from \eqref{13} that $\norm{u}^2 \ge 2 \kappa - \mu \theta \vol{\Omega}$. Hence
\begin{equation} \label{17}
2\, (c + \kappa) - \norm{u}^2 \le 2c + \mu \theta \vol{\Omega} < \frac{4 \pi}{\alpha} \left(1 - \frac{\gamma}{2}\right)
\end{equation}
by \eqref{4}. We are done if $\norm{\widetilde{u}} = 1$, so suppose $\norm{\widetilde{u}} < 1$ and let
\[
\frac{2c + \mu \theta \vol{\Omega}}{1 - \norm{\widetilde{u}}^2} < \widetilde{\nu} - 2 \eps < \widetilde{\nu} < \frac{4 \pi\, (1 - \gamma/2)/\alpha}{1 - \norm{\widetilde{u}}^2}.
\]
Then $\norm{u_j}^2 \le \widetilde{\nu} - 2 \eps$ for all $j \ge j_0$ for some $j_0$ by \eqref{16} and \eqref{17}, and
\begin{equation} \label{18}
\sup_j \int_\Omega \frac{e^{\alpha \widetilde{\nu}\, \widetilde{u}_j^2}}{|x|^\gamma}\, dx < \infty
\end{equation}
by Lemma \ref{Lemma 5}. For $M > 0$ and $j \ge j_0$, \eqref{15} then gives
\begin{align*}
& \phantom{\le \text{ }} \int_{\{u_j^+ \ge M\}} (u_j^+)^2\, \frac{e^{\alpha\, (u_j^+)^2}}{|x|^\gamma}\, dx\\[2pt]
& \le \int_{\{u_j^+ \ge M\}} u_j^2\, \frac{e^{\alpha\, (\widetilde{\nu} - 2 \eps)\, \widetilde{u}_j^2}}{|x|^\gamma}\, dx\\[2pt]
& = \norm{u_j}^2 \int_{\{u_j^+ \ge M\}} \widetilde{u}_j^2\, e^{- \eps \alpha\, \widetilde{u}_j^2}\, e^{- \eps \alpha\, (u_j/\norm{u_j})^2}\, \frac{e^{\alpha \widetilde{\nu}\, \widetilde{u}_j^2}}{|x|^\gamma}\, dx\\[2pt]
& \le \left(\max_{t \ge 0}\, te^{- \eps \alpha\, t}\right) \norm{u_j}^2 e^{- \eps \alpha\, (M/\norm{u_j})^2} \int_\Omega \frac{e^{\alpha \widetilde{\nu}\, \widetilde{u}_j^2}}{|x|^\gamma}\, dx.
\end{align*}
The last expression goes to zero as $M \to \infty$ uniformly in $j$ since $\norm{u_j}$ is bounded and \eqref{18} holds, so \eqref{14} now follows as in the proof of Lemma \ref{Lemma 4}.

Now it follows from \eqref{7}, \eqref{14}, \eqref{11}, and \eqref{13} that
\[
\norm{u_j}^2 \to \lambda \int_\Omega (u^+)^2\, \frac{e^{\alpha\, (u^+)^2}}{|x|^\gamma}\, dx + \mu \int_\Omega u\, \widetilde{g}(u)\, dx = \norm{u}^2
\]
and hence $\norm{u_j} \to \norm{u}$, so $u_j \to u$. Clearly, $E_\mu(u) = c$ and $E_\mu'(u) = 0$.
\end{proof}

\section{Proof of Theorem \ref{Theorem 1}} \label{Proof}

In this section we prove our main result. By Theorem \ref{Theorem 2}, $E_\mu$ satisfies the \PS{c} condition for all $c \ne 0$ satisfying
\[
c < \frac{2 \pi}{\alpha} \left(1 - \frac{\gamma}{2}\right) - \frac{\mu \theta}{2} \vol{\Omega}.
\]
First we show that $E_\mu$ has a uniformly positive mountain pass level below this threshold for compactness for all sufficiently small $\mu > 0$. Take $r > 0$ so small that $\closure{B_r(0)} \subset \Omega$ and let
\[
v_j(x) = \frac{1}{\sqrt{2 \pi}}\, \begin{cases}
\sqrt{\log j}, & |x| \le r/j\\[10pt]
\dfrac{\log (r/|x|)}{\sqrt{\log j}}, & r/j < |x| < r\\[10pt]
0, & |x| \ge r.
\end{cases}
\]
It is easily seen that $v_j \in H^1_0(\Omega)$ with $\norm{v_j} = 1$ and
\begin{equation} \label{20}
\int_\Omega v_j^2\, dx = \O(1/\log j) \quad \text{as } j \to \infty.
\end{equation}

\begin{lemma} \label{Lemma 6}
There exist $\mu_0, \rho, c_0 > 0$, $j_0 \ge 2$, $R > \rho$, and $\vartheta < \dfrac{2 \pi}{\alpha} \left(1 - \dfrac{\gamma}{2}\right)$ such that the following hold for all $\mu \in (0,\mu_0)$:
\begin{enumroman}
\item \label{Lemma 6 (i)} $\norm{u} = \rho \implies E_\mu(u) \ge c_0$,
\item \label{Lemma 6 (ii)} $E_\mu(Rv_{j_0}) \le 0$,
\item \label{Lemma 6 (iii)} denoting by $\Gamma = \set{\gamma \in C([0,1],H^1_0(\Omega)) : \gamma(0) = 0,\, \gamma(1) = Rv_{j_0}}$ the class of paths joining the origin to $Rv_{j_0}$,
    \begin{equation} \label{21}
    c_0 \le c_\mu := \inf_{\gamma \in \Gamma}\, \max_{u \in \gamma([0,1])}\, E_\mu(u) \le \vartheta + C \mu^2
    \end{equation}
    for some constant $C > 0$,
\item \label{Lemma 6 (iv)} $E_\mu$ has a critical point $u_\mu$ at the level $c_\mu$.
\end{enumroman}
\end{lemma}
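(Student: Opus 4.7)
The plan is to verify the mountain pass geometry (i)--(iii) for $E_\mu$ and then invoke the mountain pass theorem using Theorem~\ref{Theorem 2} for the Palais--Smale condition in (iv).

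For (i), I would Taylor-expand $e^{\alpha\,(u^+)^2}-1 = \alpha\,(u^+)^2+\O((u^+)^4\, e^{\alpha\,(u^+)^2})$; the quadratic piece combined with the variational characterisation \eqref{3} of $\lambda_1(\gamma)$ gives
\[
\half \norm{u}^2 - \frac{\lambda}{2} \int_\Omega \frac{(u^+)^2}{|x|^\gamma}\, dx \ge \half \left(1-\frac{\lambda}{\lambda_1(\gamma)}\right) \norm{u}^2.
\]
The higher-order tail is controlled by H\"older, Sobolev embedding, and \eqref{32} (applicable on $\set{\norm{u}=\rho}$ provided $\alpha\rho^2<4\pi(1-\gamma/2)$), yielding an $\O(\norm{u}^4)$ remainder. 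By \eqref{2}, $\abs{\mu \int_\Omega \widetilde{G}(u)\, dx} \le C\mu(1+\norm{u})$. Choosing $\rho>0$ small so the quadratic dominates the quartic, and then $\mu_0>0$ small so the $\mu$-term is absorbed, yields $E_\mu(u)\ge c_0>0$ on $\set{\norm{u}=\rho}$.

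For (ii), on $\set{|x|\le r/j_0}$ we have $v_{j_0}^2 = \log j_0/(2\pi)$, so
\[
\int_\Omega \frac{e^{\alpha R^2 v_{j_0}^2}}{|x|^\gamma}\, dx \ge \frac{2\pi\, r^{2-\gamma}}{2-\gamma}\, j_0^{\alpha R^2/(2\pi) - (2-\gamma)},
\]
which grows super-polynomially in $R$. This dominates both $R^2/2$ and $\mu\int_\Omega\abs{G(Rv_{j_0})}\,dx$ (the latter bounded by $C\mu\, e^{\alpha R^2\, \norm[\infty]{v_{j_0}}^2/2}$ via \eqref{2} with $\beta=\alpha/4$), so $E_\mu(Rv_{j_0})\le 0$ for $R$ sufficiently large, uniformly in $\mu\in(0,\mu_0)$.

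The heart of the lemma is (iii). The lower bound $c_\mu\ge c_0$ is the standard linking argument: every path in $\Gamma$ meets $\set{\norm{u}=\rho}$ since $\norm{Rv_{j_0}}=R>\rho$. The upper bound rests on an Adimurthi-type sharp threshold estimate, namely $\max_{s\ge 0} E_0(sv_j) < 2\pi(1-\gamma/2)/\alpha$ for $j$ large. I would argue by contradiction: if a maximiser $s_j$ violates this, then $s_j^2\ge 4\pi(1-\gamma/2)/\alpha$ (dropping the nonpositive exponential term), and the Euler--Lagrange identity $1 = \lambda\int_\Omega v_j^2\, e^{\alpha s_j^2 v_j^2}/|x|^\gamma\, dx$, restricted to $\set{|x|\le r/j}$, forces
\[
1 \ge \frac{\lambda\, r^{2-\gamma}\log j}{2-\gamma}\cdot j^{\alpha s_j^2/(2\pi)-(2-\gamma)} \ge \frac{\lambda\, r^{2-\gamma}\log j}{2-\gamma}\to\infty,
\]
a contradiction. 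Fix $j_0$ large with $\vartheta := \max_{s\ge 0} E_0(sv_{j_0})<2\pi(1-\gamma/2)/\alpha$. To obtain the refined upper bound $c_\mu\le\vartheta+C\mu^2$, I would test with a $\mu$-perturbed path $\gamma_\mu(t)=tRv_{j_0}+\mu\eta(t)$, $\eta(0)=\eta(1)=0$, where $\eta(t^*_0)$ is chosen so that the first-variation contribution $\mu\langle E_0'(t^*_0Rv_{j_0}),\eta(t^*_0)\rangle$ cancels the linear-in-$\mu$ correction $-\mu\int_\Omega G(t^*_0Rv_{j_0})\,dx$ from the straight-line path at the maximiser $t^*_0$; the non-degeneracy $\Phi''(s^*_0)<0$ of $\Phi(s):=E_0(sv_{j_0})$ at $s^*_0=t^*_0R$ then reduces the remainder to $\O(\mu^2)$ via Taylor expansion about $t^*_0$.

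Finally, (iv) is the mountain pass theorem: by (i)--(iii), $c_\mu\in[c_0,\vartheta+C\mu^2]$; shrinking $\mu_0$ further if necessary places this interval strictly below the compactness threshold $\frac{2\pi}{\alpha}(1-\gamma/2)-\frac{\mu\theta}{2}\vol{\Omega}$ in \eqref{4}, so Theorem~\ref{Theorem 2} supplies the \PS{c_\mu} condition and the mountain pass theorem produces a critical point $u_\mu$ at the level $c_\mu$. The main obstacle is the sharp estimate in (iii): the Adimurthi-type contradiction via the Moser concentration is the delicate quantitative step, and reducing the generic $\O(\mu)$ correction from the straight-line path to $\O(\mu^2)$ via the perturbed-path second-order analysis is what keeps $c_\mu$ strictly below the compactness threshold as $\mu$ varies.
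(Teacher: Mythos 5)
Your parts \ref{Lemma 6 (i)}, \ref{Lemma 6 (ii)}, \ref{Lemma 6 (iv)}, the lower bound $c_\mu \ge c_0$, and the Adimurthi-type concentration contradiction (maximizer $s_j$, the bound $\alpha s_j^2 \ge 4\pi(1-\gamma/2)$, and the Euler--Lagrange identity blowing up on $B_{r/j}(0)$) all match the paper's argument in substance. The one genuinely different step --- and the one with a gap --- is how you get the $C\mu^2$ upper bound in \ref{Lemma 6 (iii)}. Your perturbed-path scheme $\gamma_\mu(t) = tRv_{j_0} + \mu\eta(t)$ rests on two unjustified assumptions: $(a)$ non-degeneracy $\Phi''(s_0^\ast) < 0$ of the fibre maximum, which is not automatic for $\Phi(s) = E_0(sv_{j_0})$ and is nowhere available from the hypotheses; and $(b)$ the ability to cancel the linear-in-$\mu$ term $-\mu\int_\Omega G(t Rv_{j_0})\,dx$ by the first variation $\mu\,\langle E_0'(tRv_{j_0}),\eta(t)\rangle$. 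The latter fails outright if $t_0^\ast Rv_{j_0}$ happens to be a critical point of $E_0$, and even generically you must make the cancellation \emph{uniform} in $t$ near the maximizer (the perturbed functional need not peak at $t_0^\ast$), which forces you to control $\norm{\eta(t)}$ and the second derivative of $E_0$ along the path --- none of which is carried out. As written, your argument only delivers $c_\mu \le \vartheta + \O(\mu)$, which would still suffice to run the proof of Theorem \ref{Theorem 1} (since $\vartheta$ sits a fixed distance below $2\pi(1-\gamma/2)/\alpha$ and both the $\O(\mu)$ error and the shift $\mu\theta\vol{\Omega}/2$ vanish as $\mu \to 0$), but it does not prove the lemma as stated, and your closing claim that passing from $\O(\mu)$ to $\O(\mu^2)$ is ``what keeps $c_\mu$ below the threshold'' is not accurate.

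The paper obtains the $C\mu^2$ bound by a much more elementary absorption, entirely along the unperturbed ray $t \mapsto tv_j$. For $\mu \le \lambda/2$ one splits $\frac{\lambda}{2\alpha} = \frac{\lambda}{4\alpha} + \frac{\lambda}{4\alpha}$ and combines half of the singular exponential term with $\mu G$ into $\mu F(x,tv_j)$, where $F(x,t) = \frac{1}{2\alpha}\frac{e^{\alpha t^2}-1}{|x|^\gamma} + G(t) \ge -Ct$ by \eqref{2} (the nonnegative integrand $s e^{\alpha s^2}/|x|^\gamma$ dominates $g(s)$ up to a constant). The resulting linear term is then handled by Cauchy--Schwarz and Young:
\[
C\mu t \int_\Omega v_j\, dx \le C\mu t \left(\int_\Omega v_j^2\, dx\right)^{1/2} \le C\mu^2 + \frac{t^2}{2}\int_\Omega v_j^2\, dx,
\]
so that $E_\mu(tv_j) \le H_j(t) + C\mu^2$ with $H_j(t) = \frac{t^2}{2}\left(1+\eps_j\right) - \frac{\lambda}{4\alpha}\int_\Omega \frac{e^{\alpha t^2 v_j^2}-1}{|x|^\gamma}\,dx$ and $\eps_j = \int_\Omega v_j^2\,dx$. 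The extra quadratic term is harmless because $\eps_j \log j = \O(1)$ by \eqref{20}, so the same concentration contradiction you ran for $E_0$ shows $\sup_{t\ge0} H_{j_0}(t) < \frac{2\pi}{\alpha}(1-\frac{\gamma}{2})$ for some $j_0$; this single estimate yields \ref{Lemma 6 (ii)} and \ref{Lemma 6 (iii)} simultaneously. You should replace your perturbed-path analysis with an argument of this type (or otherwise justify the non-degeneracy and the uniform cancellation).
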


\begin{proof}
Set $\rho = \norm{u}$ and $\widetilde{u} = u/\rho$. Since $e^t - 1 \le t + t^2 e^t$ for all $t \ge 0$,
\begin{equation} \label{36}
\frac{1}{\alpha} \int_\Omega \frac{e^{\alpha\, (u^+)^2} - 1}{|x|^\gamma}\, dx \le \int_\Omega \frac{u^2}{|x|^\gamma}\, dx + \alpha \int_\Omega u^4\, \frac{e^{\alpha u^2}}{|x|^\gamma}\, dx.
\end{equation}
By \eqref{3},
\begin{equation}
\int_\Omega \frac{u^2}{|x|^\gamma}\, dx \le \frac{\rho^2}{\lambda_1(\gamma)}.
\end{equation}
Let $2 < r < 4/\gamma$. By the H\"{o}lder inequality,
\begin{equation} \label{37}
\int_\Omega u^4\, \frac{e^{\alpha u^2}}{|x|^\gamma}\, dx \le \left(\int_\Omega u^{4p}\, dx\right)^{1/p} \left(\int_\Omega \frac{e^{2 \alpha u^2}}{|x|^\gamma}\, dx\right)^{1/2} \left(\int_\Omega \frac{dx}{|x|^{\gamma r/2}}\right)^{1/r},
\end{equation}
where $1/p + 1/r = 1/2$. The first integral on the right-hand side is bounded by $C \rho^4$ for some constant $C > 0$ by the Sobolev embedding. Since $2 \alpha u^2 = 2 \alpha \rho^2\, \widetilde{u}^2$ and $\norm{\widetilde{u}} = 1$, the second integral is bounded when $\rho^2 \le 2 \pi\, (1 - \gamma/2)/\alpha$ by \eqref{32}. The last integral is finite since $\gamma r < 4$. So combining \eqref{36}--\eqref{37} gives
\[
\frac{1}{\alpha} \int_\Omega \frac{e^{\alpha\, (u^+)^2} - 1}{|x|^\gamma}\, dx \le \frac{\rho^2}{\lambda_1(\gamma)} + \O(\rho^4) \quad \text{as } \rho \to 0.
\]
On the other hand, it follows from \eqref{2} that $\dint_\Omega \widetilde{G}(u)\, dx$ is bounded on bounded subsets of $H^1_0(\Omega)$. So
\[
E_\mu(u) \ge \half \left(1 - \frac{\lambda}{\lambda_1(\gamma)}\right) \rho^2 + \O(\rho^4) - C \mu \quad \text{as } \rho \to 0
\]
for some constant $C > 0$. Since $\lambda(\gamma) < \lambda_1$, \ref{Lemma 6 (i)} follows from this for sufficiently small $\rho, \mu, c_0 > 0$.

Since $\norm{v_j} = 1$ and $v_j \ge 0$,
\[
E_\mu(tv_j) = \frac{t^2}{2} - \int_\Omega \left[\frac{\lambda}{2 \alpha}\, \frac{e^{\alpha t^2 v_j^2} - 1}{|x|^\gamma} + \mu\, G(tv_j)\right] dx
\]
for $t \ge 0$. For $\mu \le \lambda/2$, this gives
\[
E_\mu(tv_j) \le \frac{t^2}{2} - \int_\Omega \left[\frac{\lambda}{4 \alpha}\, \frac{e^{\alpha t^2 v_j^2} - 1}{|x|^\gamma} + \mu\, F(x,tv_j)\right] dx,
\]
where
\[
F(x,t) = \frac{1}{2 \alpha}\, \frac{e^{\alpha t^2} - 1}{|x|^\gamma} + G(t) = \int_0^t \left(s\, \frac{e^{\alpha s^2}}{|x|^\gamma} + g(s)\right) ds \ge - Ct
\]
for some generic positive constant $C$ by \eqref{2}, so
\[
E_\mu(tv_j) \le \frac{t^2}{2} - \frac{\lambda}{4 \alpha} \int_\Omega \frac{e^{\alpha t^2 v_j^2} - 1}{|x|^\gamma}\, dx + C \mu t \int_\Omega v_j\, dx.
\]
Since
\[
C \mu t \int_\Omega v_j\, dx \le C \mu t \left(\int_\Omega v_j^2\, dx\right)^{1/2} \le C \mu^2 + \frac{t^2}{2} \int_\Omega v_j^2\, dx,
\]
then
\[
E_\mu(tv_j) \le H_j(t) + C \mu^2,
\]
where
\[
H_j(t) = \frac{t^2}{2} \left(1 + \int_\Omega v_j^2\, dx\right) - \frac{\lambda}{4 \alpha} \int_\Omega \frac{e^{\alpha t^2 v_j^2} - 1}{|x|^\gamma}\, dx \to - \infty \quad \text{as } t \to \infty.
\]
So to prove \ref{Lemma 6 (ii)} and \ref{Lemma 6 (iii)}, it suffices to show that $\exists j_0 \ge 2$ such that
\[
\vartheta := \sup_{t \ge 0}\, H_{j_0}(t) < \frac{2 \pi}{\alpha} \left(1 - \frac{\gamma}{2}\right).
\]

Suppose $\sup_{t \ge 0} H_j(t) \ge 2 \pi\, (1 - \gamma/2)/\alpha$ for all $j$. Since $H_j(t) \to - \infty$ as $t \to \infty$, there exists $t_j > 0$ such that
\begin{equation} \label{23}
H_j(t_j) = \frac{t_j^2}{2}\, (1 + \eps_j) - \frac{\lambda}{4 \alpha} \int_\Omega \frac{e^{\alpha t_j^2 v_j^2} - 1}{|x|^\gamma}\, dx = \sup_{t \ge 0}\, H_j(t) \ge \frac{2 \pi}{\alpha} \left(1 - \frac{\gamma}{2}\right)
\end{equation}
and
\begin{equation} \label{24}
H_j'(t_j) = t_j \left(1 + \eps_j - \frac{\lambda}{2} \int_\Omega v_j^2\, \frac{e^{\alpha t_j^2 v_j^2}}{|x|^\gamma}\, dx\right) = 0,
\end{equation}
where $\eps_j = \dint_\Omega v_j^2\, dx \to 0$ by \eqref{20}. The inequality in \eqref{23} gives
\[
\alpha t_j^2 \ge \frac{4 \pi}{1 + \eps_j} \left(1 - \frac{\gamma}{2}\right),
\]
and then \eqref{24} gives
\begin{multline*}
\frac{2}{\lambda}\, (1 + \eps_j) = \int_\Omega v_j^2\, \frac{e^{\alpha t_j^2 v_j^2}}{|x|^\gamma}\, dx \ge \int_{B_{r/j}(0)} v_j^2\, \frac{e^{4 \pi\, (1 - \gamma/2)\, v_j^2/(1 + \eps_j)}}{|x|^\gamma}\, dx\\[5pt]
= \frac{r^{2\, (1 - \gamma/2)}}{2\, (1 - \gamma/2)}\, \frac{\log j}{j^{2\, (1 - \gamma/2)\, \eps_j/(1 + \eps_j)}}.
\end{multline*}
This is impossible for large $j$ since
\[
j^{2\, (1 - \gamma/2)\, \eps_j/(1 + \eps_j)} \le j^{2\, (1 - \gamma/2)\, \eps_j} = e^{2\, (1 - \gamma/2)\, \eps_j \log j} = \O(1)
\]
by \eqref{20}.

By \ref{Lemma 6 (i)}--\ref{Lemma 6 (iii)}, $E_\mu$ has the mountain pass geometry and the mountain pass level $c_\mu$ satisfies
\[
0 < c_\mu \le \vartheta + C \mu^2 < \frac{2 \pi}{\alpha} \left(1 - \frac{\gamma}{2}\right) - \frac{\mu \theta}{2} \vol{\Omega}
\]
for all sufficiently small $\mu > 0$, so $E_\mu$ satisfies the \PS{c_\mu} condition. So $E_\mu$ has a critical point $u_\mu$ at this level by the mountain pass theorem.
\end{proof}

Next we prove the following lemma.

\begin{lemma} \label{Lemma 8}
If $\seq{u_j}$ is a convergent sequence in $H^1_0(\Omega)$, then
\[
\sup_j \int_\Omega \frac{e^{\beta u_j^2}}{|x|^\gamma}\, dx < \infty
\]
for all $\beta > 0$ and $0 \le \gamma < 2$.
\end{lemma}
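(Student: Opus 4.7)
The plan is to exploit the convergence $u_j \to u$ in $H^1_0(\Omega)$ to peel off the ``large part'' $u$ (whose exponential integral is finite for \emph{any} exponent by the Adimurthi-Sandeep embedding) and reduce the tail $u_j - u$, which has small norm, to a situation covered by the critical bound \eqref{32}. Straight normalization $u_j/\norm{u_j}$ does not work because $\beta \norm{u_j}^2$ may exceed the critical threshold $4\pi(1-\gamma/2)$, so the splitting is essential.

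First I would fix an arbitrary $\eps > 0$ and use the elementary inequality
\[
u_j^2 = (u + (u_j - u))^2 \le (1+\eps)\, u^2 + \left(1 + \tfrac{1}{\eps}\right)(u_j - u)^2,
\]
so that $e^{\beta u_j^2} \le e^{\beta(1+\eps) u^2}\, e^{\beta(1+1/\eps)(u_j-u)^2}$. Splitting the weight as $|x|^{-\gamma} = |x|^{-\gamma/2} \cdot |x|^{-\gamma/2}$ and applying the Cauchy-Schwarz inequality gives
\[
\int_\Omega \frac{e^{\beta u_j^2}}{|x|^\gamma}\, dx \le \left(\int_\Omega \frac{e^{2\beta(1+\eps)\, u^2}}{|x|^\gamma}\, dx\right)^{\!1/2} \left(\int_\Omega \frac{e^{2\beta(1+1/\eps)(u_j - u)^2}}{|x|^\gamma}\, dx\right)^{\!1/2}.
\]

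The first factor is a fixed finite number by the Adimurthi-Sandeep embedding, which holds for every exponent and every $u \in H^1_0(\Omega)$. For the second factor, for indices $j$ with $u_j \neq u$, I would write $u_j - u = \norm{u_j - u}\, w_j$ with $\norm{w_j} = 1$; then
\[
2\beta\left(1 + \tfrac{1}{\eps}\right)(u_j - u)^2 = 2\beta\left(1 + \tfrac{1}{\eps}\right) \norm{u_j - u}^2\, w_j^2.
\]
Since $\norm{u_j - u} \to 0$, the coefficient is eventually $\le 4\pi(1 - \gamma/2)$, so \eqref{32} applied to $w_j$ bounds the second factor uniformly for all large $j$.

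The remaining finitely many indices contribute finitely many finite integrals (again by Adimurthi-Sandeep), and taking the supremum over all $j$ gives the desired bound. The only mildly delicate point is choosing $\eps$ and the Hölder split correctly so that both factors are controlled simultaneously, but since we only need \emph{some} finite $\eps > 0$ (any choice works, because the first factor is always finite and the smallness of $\norm{u_j - u}$ handles the second), there is no real obstacle.
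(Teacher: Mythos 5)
Your proof is correct and follows essentially the same route as the paper: split $u_j^2$ via an elementary inequality into a multiple of $u^2$ plus a multiple of $(u_j-u)^2$, apply Cauchy--Schwarz with the weight $|x|^{-\gamma/2}\cdot|x|^{-\gamma/2}$, control the $u$-factor by the non-uniform Adimurthi--Sandeep embedding, and control the $(u_j-u)$-factor by normalizing and invoking \eqref{32} since $\norm{u_j-u}\to 0$. The only difference is cosmetic: the paper uses the fixed constants $2u^2+2(u_j-u)^2$ where you use the $\eps$-weighted version, which is unnecessary here since smallness of $\norm{u_j-u}$ absorbs any fixed constant.
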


\begin{proof}
Let $u \in H^1_0(\Omega)$ be the limit of $\seq{u_j}$. Since $u_j^2 \le (|u| + |u_j - u|)^2 \le 2 u^2 + 2\, (u_j - u)^2$,
\[
\int_\Omega \frac{e^{\beta u_j^2}}{|x|^\gamma}\, dx \le \left(\int_\Omega \frac{e^{4 \beta u^2}}{|x|^\gamma}\, dx\right)^{1/2} \left(\int_\Omega \frac{e^{4 \beta\, (u_j - u)^2}}{|x|^\gamma}\, dx\right)^{1/2}.
\]
The first integral on the right-hand side is finite, and the second integral equals
\[
\int_\Omega \frac{e^{4 \beta\, \norm{u_j - u}^2 w_j^2}}{|x|^\gamma}\, dx,
\]
where $w_j = (u_j - u)/\norm{u_j - u}$. Since $\norm{w_j} = 1$ and $\norm{u_j - u} \to 0$, this integral is bounded by \eqref{32}.
\end{proof}

Now we show that $u_\mu$ is positive in $\Omega$, and hence a solution of problem \eqref{1}, for all sufficiently small $\mu \in (0,\mu_0)$. It suffices to show that for every sequence $\mu_j > 0,\, \mu_j \to 0$, a subsequence of $u_j = u_{\mu_j}$ is positive in $\Omega$. By \eqref{21}, a renamed subsequence of $c_{\mu_j}$ converges to some $c$ satisfying
\[
0 < c < \frac{2 \pi}{\alpha} \left(1 - \frac{\gamma}{2}\right).
\]
Then a renamed subsequence of $\seq{u_j}$ converges in $H^1_0(\Omega)$ to a critical point $u$ of $E_0$ at the level $c$ by Theorem \ref{Theorem 2}. Since $c > 0$, $u$ is nontrivial.

Since $u_j$ is a critical point of $E_{\mu_j}$,
\[
- \Delta u_j = \lambda u_j^+\, \frac{e^{\alpha\, (u_j^+)^2}}{|x|^\gamma} + \mu_j\, \widetilde{g}(u_j)
\]
in $\Omega$. Let $2 < p < 2/\gamma$ and $1 < r < 2/\gamma p$. By the H\"{o}lder inequality,
\[
\int_\Omega \bigg|u_j^+\, \frac{e^{\alpha\, (u_j^+)^2}}{|x|^\gamma}\bigg|^p\, dx \le \left(\int_\Omega |u_j|^{pq}\, dx\right)^{1/q} \left(\int_\Omega \frac{e^{pr \alpha u_j^2}}{|x|^{\gamma pr}}\, dx\right)^{1/r},
\]
where $1/q + 1/r = 1$. The first integral on the right-hand side is bounded by the Sobolev embedding, and so is the second integral by Lemma \ref{Lemma 8} since $\gamma pr < 2$, so $u_j^+\, e^{\alpha\, (u_j^+)^2}/|x|^\gamma$ is bounded in $L^p(\Omega)$. By \eqref{2} and Lemma \ref{Lemma 8} again, $\widetilde{g}(u_j)$ is also bounded in $L^p(\Omega)$. By the Calderon-Zygmund inequality, then $\seq{u_j}$ is bounded in $W^{2,p}(\Omega)$. Since $W^{2,p}(\Omega)$ is compactly embedded in $C^1(\closure{\Omega})$ for $p > 2$, it follows that a renamed subsequence of $u_j$ converges to $u$ in $C^1(\closure{\Omega})$.

Since $u$ is a nontrivial solution of the problem
\[
\left\{\begin{aligned}
- \Delta u & = \lambda u^+\, \frac{e^{\alpha\, (u^+)^2}}{|x|^\gamma} && \text{in } \Omega\\
u & = 0 && \text{on } \bdry{\Omega},
\end{aligned}\right.
\]
$u > 0$ in $\Omega$ by the strong maximum principle and its interior normal derivative $\partial u/\partial \nu > 0$ on $\bdry{\Omega}$ by the Hopf lemma. Since $u_j \to u$ in $C^1(\closure{\Omega})$, then $u_j > 0$ in $\Omega$ for all sufficiently large $j$. This concludes the proof of Theorem \ref{Theorem 1}.

\def\cdprime{$''$}

\end{document}